\def\hpq0{h^{p,q}_{\leq 0}}
\def\Hpq0{\H_{\leq 0}^{p,q}}
\def\C{{\mathbb C}}
\def\H{{\mathcal H}}
\def\E{{\mathcal E}}
\def\Re{{\rm Re\,  }}
\def\be{\begin{equation}}
\def\ee{\end{equation}}
\newtheorem{thm}{Theorem}[section]
\newtheorem{lma}[thm]{Lemma}
\newtheorem{prop}[thm]{Proposition}
\theoremstyle{definition}
\theoremstyle{remark}
\newtheorem{preremark}{Remark}
\newtheorem{preex}{Example}
\numberwithin{equation}{section}
\title[]
{The openness conjecture for plurisubharmonic functions}
\address{Department of Mathematics\\Chalmers University
  of Technology \\
 } 
\email{ bob@chalmers.se}
\author[]{ Bo Berndtsson}
\begin{document}

\begin{abstract}We give a proof of the openness conjecture of Demailly and Koll\'ar. 
\end{abstract}
\maketitle

\section{Introduction}
Let $ u$ be a plurisubharmonic function defined in a neighbourhood of the origin of $\C^n$ such that $e^{-u}$ lies in $L^1$. The {\it openness conjecture}, first proposed by Demailly and Koll\'ar in \cite{Demailly-Kollar}, says that then there is a number $p>1$ such that $e^{-u}$ lies in $L^p$, possibly after shrinking the neighbourhood. This conjecture has attracted a good deal of attention; in particular it has been completely proved in dimension 2 by Favre and Jonsson, \cite{Favre-Jonsson}. 
In arbitrary dimension it  has been reduced to a purely algebraic statement in \cite{Jonsson-Mustata}.

In this paper we will  prove  the openness conjecture in any dimension. 
\begin{thm} Let $u$ be a plurisubharmonic function in the unit ball with $u\leq 0$. Assume that
$$
\int_B e^{-u} <\infty.
$$
Then there is a number $p>1$ such that
$$
\int_{B/2}  e^{-pu} <\infty.
$$
Moreover, $p$ can be taken so that
$$
p\geq 1 +\delta_n/\int_B e^{-u},
$$
where $\delta_n$ depends only on the dimension. 
\end{thm}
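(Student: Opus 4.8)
The approach I would take is an induction on the dimension $n$, combining the one‑dimensional case (done by hand) with the Ohsawa--Takegoshi $L^2$‑extension theorem in the inductive step; the statement on $B/2$ then follows from the resulting local estimate by a covering argument.

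\textbf{Base case $n=1$.} For $u$ subharmonic on the disc, write $u$ through its Riesz decomposition. Then $e^{-u}$ can fail to be locally integrable only at an atom $w_0$ of the Riesz mass, and $\int_B e^{-u}<\infty$ forces every atom in $B/2$ to have mass $a<2$. Since $u\le 0$, on a fixed disc $B_{1/4}(w_0)$ one has $e^{-u}\ge c_0|z-w_0|^{-a}$ with $c_0$ absolute, hence $\int_B e^{-u}\ge c_1(2-a)^{-1}$, so $2-a\ge c_1/\int_B e^{-u}$. Thus the local integrability threshold at every point of $B/2$ is $\ge 1+\delta_1/\int_B e^{-u}$, and a covering of $B/2$ by small discs gives the statement.

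\textbf{Inductive step.} Assume the theorem in dimension $n-1$. Choose a complex hyperplane $H$ with $M_H:=\int_{H\cap B}e^{-u}<\infty$ and, for a suitable choice, $\lesssim_n M$ — such $H$ exist, since averaging $M_H$ over a pencil of parallel hyperplanes meeting a fixed ball gives $\asymp_n M$. The restriction $u|_H$ is plurisubharmonic on the $(n-1)$‑ball $H\cap B$, so the inductive hypothesis yields $\int_{(H\cap B)/2}e^{-p_0u}<\infty$ with $p_0=1+\delta_{n-1}/M_H$. Now apply the $L^2$‑extension theorem on a convex domain $\Omega$ with $\Omega\cap H$ equal to that half‑ball, with plurisubharmonic weight $p_0u$, to extend the constant $1$ from $\Omega\cap H$: one gets $G\in\O(\Omega)$, $G\equiv1$ on $\Omega\cap H$, with $\int_\Omega|G|^2e^{-p_0u}\le C_n\int_{\Omega\cap H}e^{-p_0u}<\infty$. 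Wherever $G\ne0$ — in particular on a neighbourhood of $\Omega\cap H$, where $|G|>\tfrac12$ — one may divide, so $e^{-p_0u}$ is locally integrable there; moving $H$ around reaches every point of $B/2$ outside the analytic set where the Lelong number of $u$ is $\ge 2n-2$, and those remaining points are handled directly as in the base case, using $\int_B e^{-u}$ to bound the Lelong number away from $2n$. Covering $\overline{(1-\varepsilon)(B/2)}$ by finitely many such neighbourhoods and letting $\varepsilon\to0$ gives $\int_{B/2}e^{-pu}<\infty$ with $p=1+\delta_n/M$, where $\delta_n:=\delta_{n-1}/C_n'$ depends only on $n$.

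\textbf{The main obstacle.} The conceptual point is that one cannot simply slice and apply Fubini: a non‑negligible set of hyperplane sections can have infinite or very large $L^1$‑norm, so no single exponent $p>1$ would survive that way. The function of the $L^2$‑extension theorem is precisely to upgrade an estimate on one well‑chosen section to a full‑dimensional one. The delicate part is then the bookkeeping: making sure that through (essentially) every point of $B/2$ there passes a section with $M_H\lesssim_n M$, treating the exceptional locus of high Lelong number separately, and checking that the extension constant $C_n$ — which depends only on the fixed transverse size of $\Omega$ — does not degenerate through the induction, so that $\delta_n$ stays bounded below by a dimensional constant. (A more self‑contained route, closer to the techniques used here, would replace the induction by a single application of the plurisubharmonic variation of Bergman kernels, i.e.\ positivity of direct image bundles, to the truncations $\max(u,-t)$; the subtlety in that route is that the natural one‑parameter weight $t\,u(z)$ is \emph{not} plurisubharmonic in $(z,t)$ because $u\le0$, so the family must be set up with more care.)
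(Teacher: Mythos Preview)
Your inductive step has a genuine gap, and it is precisely the obstacle that made the openness conjecture resist direct attack. For the Ohsawa--Takegoshi extension to yield integrability of $e^{-p_0u}$ near a given $z_0$, you need a hyperplane $H$ \emph{through $z_0$} with $M_H\lesssim_n M$, since the only place you control $|G|$ from below is on $H$ itself. Fubini over a parallel pencil produces some good $H$, but not one through a prescribed point; averaging instead over hyperplanes through $z_0$ introduces the kernel $|z-z_0|^{2-2n}$, and already for $u=(2n-\epsilon)\log|z|$ \emph{every} hyperplane through the origin has $M_H=\infty$ while $\int_B e^{-u}\asymp 1/\epsilon<\infty$. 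So an exceptional locus is unavoidable, and your description of it as $\{\nu(u,\cdot)\ge 2n-2\}$ is neither justified nor sufficient: whether some $H\ni z_0$ has $M_H\lesssim M$ is not governed by the Lelong number at $z_0$ alone, since other singularities of $u$ may sit on every such $H$.

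The treatment of the exceptional locus is then circular. By Siu it is analytic, but it can be positive-dimensional, and ``handled directly as in the base case, using $\int_B e^{-u}$ to bound the Lelong number away from $2n$'' does not give openness: knowing $\nu<2n$ says nothing about integrability of $e^{-pu}$ once $\nu\ge 2$, and you are left proving the theorem for $u$ restricted to a possibly singular subvariety --- the same problem again, on a space where your smooth inductive hypothesis does not apply. This restriction difficulty (controlling $e^{-u|_H}$ quantitatively in terms of $e^{-u}$) is essentially equivalent in strength to openness itself, which is why induction on dimension via extension had not succeeded.

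Your parenthetical remark is much closer to what the paper actually does. The device that replaces the non-psh weight $t\,u(z)$ is the family $v_s(z)=2\max(u(z)+s,0)$, which \emph{is} plurisubharmonic on $\{\Re\zeta>0\}\times B$. The associated norms $\|h\|_s^2=\int_B|h|^2e^{-v_s}$ then define a positively curved metric on the trivial bundle, and an elementary identity gives $\int_B|h|^2e^{-pu}=a_p\int_0^\infty e^{ps}\|h\|_s^2\,ds+b_p\|h\|_0^2$. Positive curvature (a log-concavity substitute) forces, after correcting $h=1$ by a small holomorphic function, the decay $\|h_s\|_s^2\le e^{-(1+\epsilon)s}\|h\|_0^2$, which makes the $p=1+\epsilon/2$ integral converge. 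No slicing and no induction on $n$ are involved.
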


The proof of the theorem is inspired by a result from \cite{Berman-Berndtsson}. There we proved that the Schwarz symmetrization, $u^*$,  of an $S^1$-invariant plurisubharmonic function in the ball,$u$,   is again plurisubharmonic. Here $S^1$-invariance means that $u(e^{i\theta}z)=u(z)$ for any $e^{i\theta}$ on $S^1$. Since the Schwarz symmetrization of $u$ is equidistributed with $u$, it holds that
$$
\int_B F(u^*)=\int_B F(u)
$$
for any (measurable) function $F$. Choosing $F(t)=e^{-t}$ and $F(t)=e^{-\epsilon t}$,  this  reduces the openness problem for $S^1$-invariant functions to the case of radial functions. As a consequence one easily sees  that the openness conjecture holds for any $S^1$-invariant plurisubharmonic function in the ball. 

The proof of this, via the symmetrization result from \cite{Berman-Berndtsson}, depends ultimately on a complex variant of the Brunn-Minkowski inequality from \cite{Berndtsson}. The argument can be rephrased in the following form. A consequence of the 'Brunn-Minkowski'-inequality is that
if $u$ is plurisubharmonic and $S^1$-invariant, the volume of the sublevel sets
$$
\Omega(s):=\{z; u(z)<-s\}
$$
is a logconcave function of $s$. The integral of $e^{-u}$ can be written as
\be
\int_0^\infty e^s |\Omega(s)|ds + \omega_n,
\ee
with $\omega_n$ the volume of the unit ball, 
and the logconcavity is the key to studying the convergence of this integral.
In fact, if $|\Omega(s)|$ is logconcave, the integral (1.1) converges (if and) only if $|\Omega(s)|$ decreases like  $e^{-(1+\epsilon)s}$ at infinity (cf Proposition 3.1).

In the situation of Theorem 1.1, instead of looking at just volumes of sets or integrals of functions, we look at the $L^2$-norms on the space of holomorphic functions in the unit ball, induced by our plurisubharmonic weight function $u$ in section 2. We then find a representation of such an $L^2$-norm as an integral over $(0,\infty)$ of weaker norms depending on the variable $s$. These weaker norms have a property analogous to logconcavity - they define an hermitean metric on a certain vector bundle of positive curvature. This positivity property is finally shown in the last section to imply Theorem 1.1.

I would like to thank Mattias Jonsson and Mihai Paun for comments and discussions.
\section{Hermitean norms on $H^2(B,dm)(B)$}

Let $B$ be the unit ball of $\C^n$ and denote by $H_0:=H^2(B,dm)$ the space of  holomorphic functions in $B$, square integrable with respect to Lebesgue measure, $dm$. For $u$, plurisubharmonic and negative in $B$, we also let $H^2(B,e^{-u}dm)$ be the space of holomorphic functions in $B$ such that
$$
\int_B |h|^2 e^{-u}<\infty
$$
(we suppress the Lebesgue measure $dm$ in all integrals over the ball).

For any $s\geq 0$ we put
$$
u_s:=\max (u+s,0)=\max(u, -s) +s.
$$
Note that since $u\leq 0$, $u_0=0$ and  $0\leq u_s\leq s$. Moreover $u_s\geq u+s$. 
For any $s\geq 0$ we define a new norm on $H^2(B,dm)$ by
$$
\|h\|^2_s=\int_B |h|^2 e^{-2u_s}.
$$
Notice the factor 2 in the exponent! By the inequalities for $u_s$ above, we have if $p<2$
\be
\int_B |h|^2 e^{-2u_s}\leq \int_B |h|^2 e^{-pu_s}\leq e^{-ps}\int_B |h|^2 e^{-pu}.
\ee
Thus, if $h$ is square integrable aginst the weight $e^{-pu}$, the norms $\|h\|^2_s$ decrease like $e^{-ps}$.
 
These norms form  a decreasing family of norms which for $s=0$ is the standard unweighted $L^2$-norm, and which for any $s$ is equivalent to the standard $L^2$-norm. We also let the norm without subscript be the norm in $H^2(B,e^{-u}dm)$,
$$
\|h\|^2=\int_B |h|^2 e^{-u}.
$$
In spite of its simplicity the following proposition is crucial.
\begin{prop} If $h$ lies in $H^2(B,e^{-u}dm)$, then
$$
\|h\|^2= 2\int_0^\infty e^s \|h\|^2_s ds +\|h\|^2_0.
$$
More generally, if $0<p<2$, then
\be
\int_B |h|^2 e^{-pu} = a_p\int_0^\infty e^{ps} \|h\|_s^2 +b_p \|h\|^2_0.
\ee
\end{prop}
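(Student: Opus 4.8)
The identity is an exact Fubini computation, and all of its content sits in one one-variable integral. Fix $z\in B$ with $u(z)>-\infty$ and put $t:=-u(z)\ge 0$. Then $u_s(z)=\max(u(z)+s,0)=\max(s-t,0)$, so the weight in $\|\cdot\|_s$ is, at this point, the explicit function of $s$
\[
e^{-2u_s(z)}=e^{-2\max(s-t,0)}=
\begin{cases}1,& 0\le s\le t,\\[1mm] e^{-2(s-t)},& s> t.\end{cases}
\]
Since $u$ is plurisubharmonic and $\not\equiv-\infty$, the set $\{u=-\infty\}$ is null, so this holds for a.e.\ $z$. Every integrand below is nonnegative, so Tonelli's theorem applies and
\[
\int_0^\infty e^{ps}\|h\|_s^2\,ds=\int_B|h(z)|^2\Bigl(\int_0^\infty e^{ps}e^{-2u_s(z)}\,ds\Bigr)dm(z).
\]

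\textbf{The inner integral.} For $t\ge 0$ and $0<p<2$, splitting at $s=t$ and integrating,
\[
g_p(t):=\int_0^\infty e^{ps}e^{-2\max(s-t,0)}\,ds
=\int_0^t e^{ps}\,ds+\int_t^\infty e^{(p-2)s+2t}\,ds
=\frac{e^{pt}-1}{p}+\frac{e^{pt}}{2-p},
\]
the tail integral converging \emph{precisely because} $p<2$. Hence $g_p(t)=\frac{2}{p(2-p)}\,e^{pt}-\frac1p$, a linear combination of $e^{pt}=e^{-pu(z)}$ and the constant $1=e^{-2u_0(z)}$ (recall $u_0\equiv 0$, so $\|h\|_0^2=\int_B|h|^2$). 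Substituting this into the Fubini identity gives
\[
\int_0^\infty e^{ps}\|h\|_s^2\,ds=\frac{2}{p(2-p)}\int_B|h|^2e^{-pu}-\frac1p\,\|h\|_0^2 ,
\]
and solving for $\int_B|h|^2e^{-pu}$ yields the asserted identity for general $0<p<2$, with $a_p=\tfrac12 p(2-p)$ and $b_p=\tfrac12(2-p)$, both positive; the first displayed identity is the case $p=1$, where the left side is $\|h\|^2=\int_B|h|^2e^{-u}$.

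\textbf{Finiteness and loose ends.} Under the running hypothesis $\int_B|h|^2e^{-pu}<\infty$ the right-hand side above is finite, since $u\le 0$ forces $\|h\|_0^2=\int_B|h|^2\le\int_B|h|^2e^{-pu}$; thus the identity simultaneously shows $\int_0^\infty e^{ps}\|h\|_s^2\,ds<\infty$, and no separate convergence argument is required before the Tonelli step, which is valid for $[0,\infty]$-valued integrals regardless.

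\textbf{Main obstacle.} There is essentially none: the proposition is a bookkeeping computation. The only genuine point to watch is the role of the constraint $p<2$ — it is exactly what makes the tail $\int_t^\infty e^{(p-2)s+2t}\,ds$ finite and keeps $a_p,b_p$ finite and positive, which is the reason for the factor $2$ in the definition of $\|\cdot\|_s$; as $p\uparrow 2$ the identity degenerates. The remaining care is the harmless a.e.\ remark that $\{u=-\infty\}$ is null, so that the pointwise formula for $u_s(z)$ is legitimate inside the integral, and a routine check of the constants.
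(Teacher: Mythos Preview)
Your proof is correct and follows essentially the same route as the paper: compute the one-variable integral $\int_0^\infty e^{ps}e^{-2\max(s-t,0)}\,ds$ by splitting at $s=t$ (the paper isolates this as a separate lemma with $x=-t$), then apply Fubini/Tonelli. You are a bit more explicit about the justification of Tonelli and about the null set $\{u=-\infty\}$, and you actually solve for the constants $a_p=\tfrac12 p(2-p)$, $b_p=\tfrac12(2-p)$, which the paper leaves implicit. One small remark: at $p=1$ your constants give $\|h\|^2=\tfrac12\int_0^\infty e^s\|h\|_s^2\,ds+\tfrac12\|h\|_0^2$, which agrees with the paper's own computation but not with the coefficients $2$ and $1$ displayed in the proposition --- that is a typo in the statement, not an error on your part.
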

For the proof we use the following lemma.
\begin{lma} If $x<0$
and  $0<p<2$,
$$
\int_0^\infty e^{ps} e^{-2\max(x+s,0)}ds +1/p= C_p e^{-px}.
$$
\end{lma}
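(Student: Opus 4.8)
The plan is a direct elementary computation. Since $x<0$ the point $s=-x$ is positive, and I would split the integral there. On the interval $0\le s\le -x$ the argument $x+s$ is $\le 0$, so $\max(x+s,0)=0$ and the integrand is simply $e^{ps}$; its integral over this interval is $\frac1p\left(e^{-px}-1\right)$. On the interval $s\ge -x$ the argument $x+s$ is $\ge 0$, so $\max(x+s,0)=x+s$ and the integrand equals $e^{-2x}e^{(p-2)s}$. Because $p<2$ the exponent $p-2$ is negative, so this tail converges, and a one-line computation of $\int_{-x}^{\infty}e^{-2x}e^{(p-2)s}\,ds$ gives $\frac{1}{2-p}e^{-px}$.

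Adding the two pieces, the integral in the lemma equals $\frac1p\left(e^{-px}-1\right)+\frac{1}{2-p}e^{-px}$; the extra term $1/p$ on the left is exactly what is needed to cancel the constant $-\frac1p$, leaving the pure multiple $\left(\frac1p+\frac1{2-p}\right)e^{-px}$ of $e^{-px}$. Hence the identity holds with
$$
C_p=\frac1p+\frac1{2-p}=\frac{2}{p(2-p)}.
$$
As a sanity check one can let $x\to 0^-$: both sides tend to $C_p$, and for $p=1$ the integral is $2e^{-x}-1$, consistent with $C_1=2$.

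There is no genuine obstacle here; the only points that deserve a moment's care are that the convergence of the tail integral is precisely what forces the hypothesis $p<2$, and that the seemingly mysterious summand $1/p$ is designed exactly to absorb the boundary term produced by the split. Once the lemma is in hand, Proposition 2.2 follows by putting $x=u(z)$ (legitimate since $u<0$ wherever it is not zero, and the identity degenerates harmlessly on $\{u=0\}$, where $u_s=\max(u+s,0)$ reduces correctly), multiplying through by $|h(z)|^2$, and integrating over $B$, using Tonelli's theorem to interchange the $s$-integral with the integral over the ball; the constants $a_p$ and $b_p$ are then read off directly from $C_p$.
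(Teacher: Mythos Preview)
Your proof is correct and follows exactly the same route as the paper: split the integral at $s=-x$, evaluate the two elementary pieces, and observe that the added $1/p$ cancels the boundary term, yielding $C_p=\frac{1}{p}+\frac{1}{2-p}$. The additional sanity checks and remarks on deducing Proposition~2.1 are accurate and match the paper's subsequent argument.
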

\begin{proof}
$$
\int_0^\infty e^{ps} e^{-2\max(x+s,0)}ds =\int_0^{-x} e^{ps} ds +
 e^{-2x}\int_{-x}^{\infty} e^{(p-2)s}ds = (1/p+1/(2-p))e^{-px}-1/p.
$$
\end{proof}
To prove the proposition we apply the lemma with $x=u$
$$
C_p\int_B |h|^2 e^{-pu}=\int_0^\infty e^{ps}\int_B |h|^2 e^{-2u_s} ds +1/p\int_B|h|^2 ,
$$
which immediately gives (2.2). 
 \qed

\bigskip

\bigskip

We finally quote a particular case of a result from \cite{2Berndtsson} that is the most important ingredient in the proof of Theorem 1.1. We let $D$ be a domain in $\C$ and let $\E:=H^2(B,dm)\times D$.  This can be regarded as a trivial vector bundle, of infinite rank, over $D$. Let for $\zeta$ in $D$, $v_\zeta$ be bounded plurisubharmonic functions on $B$.
\begin{thm} With notation as above, define an hermitean metric on $\E$ by
\be
\|h\|^2_\zeta=\int_B |h|^2 e^{-v_\zeta}
\ee
Assume  that $v_\zeta$ is plurisubharmonic on $D\times B$. Then the curvature of the metric (2.3) on $\E$ is nonnegative.
\end{thm}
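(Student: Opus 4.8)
The plan is to verify nonnegativity of the curvature by the usual Bochner--H\"ormander method for metrics on direct images. Write $\phi(\zeta,z):=v_\zeta(z)$, so that $\phi$ is plurisubharmonic on $D\times B$. First I would reduce, by a routine limiting argument, to the case where $\phi$ is smooth on (a slight shrinking of) $D\times B$ and strictly plurisubharmonic in $z$ on each fibre: one approximates $\phi$ from above by smooth plurisubharmonic functions on $D\times B$ with $\varepsilon\abs{z}^2$ added, shrinking $B$ if necessary (harmless for the application to Theorem 1.1), and notes that nonnegativity of the curvature of $\E$ is preserved in the limit. Recall that curvature is tensorial: fixing $\zeta_0\in D$ and $u_0\in H^2(B,dm)$, for \emph{any} holomorphic section $u$ of $\E$ with $u(\zeta_0)=u_0$ one has the identity
\[
\langle\Theta_{\zeta_0}u_0,u_0\rangle=\norm{\nabla^{1,0}u}^2_{\zeta_0}-\partial_\zeta\partial_{\bar\zeta}\norm{u}^2_\zeta\big|_{\zeta_0}
\]
(reading a $(1,1)$--form on the one--dimensional base as a scalar), and I would simply take $u$ to be the constant section $u_0$.

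Second, the (routine) computation. With $u\equiv u_0$, differentiating $\norm{u}^2_\zeta=\int_B\abs{u_0}^2\ephi$ twice in $\zeta$ and using that $u_0$ is holomorphic in $z$ gives
\[
\partial_\zeta\partial_{\bar\zeta}\norm{u}^2_\zeta\big|_{\zeta_0}=\int_B\abs{\phi_\zeta u_0}^2\ephi-\int_B\abs{u_0}^2\phi_{\zeta\bar\zeta}\,\ephi ,
\]
while compatibility of the Chern connection with the metric forces $\nabla^{1,0}u=-P_{\zeta_0}(\phi_\zeta u_0)$, where $P_{\zeta_0}$ is the orthogonal projection of $L^2(B,e^{-\phi(\zeta_0,\cdot)})$ onto its subspace of holomorphic functions. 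Decomposing $\phi_\zeta u_0=P_{\zeta_0}(\phi_\zeta u_0)+g$ orthogonally, the term $\norm{\nabla^{1,0}u}^2=\norm{P_{\zeta_0}(\phi_\zeta u_0)}^2$ cancels the holomorphic part of $\int_B\abs{\phi_\zeta u_0}^2\ephi$, and one arrives at the clean identity
\[
\langle\Theta_{\zeta_0}u_0,u_0\rangle=\int_B\abs{u_0}^2\phi_{\zeta\bar\zeta}(\zeta_0,\cdot)\,\ephi-\norm{g}^2_{\zeta_0},
\]
which depends only on $u_0$ and $\phi$. Here $g:=\phi_\zeta u_0-P_{\zeta_0}(\phi_\zeta u_0)$ is precisely the minimal $L^2(B,e^{-\phi(\zeta_0,\cdot)})$--solution of $\dbar_z g=\dbar_z(\phi_\zeta u_0)=u_0\,\dbar_z\phi_\zeta$ (minimality being exactly orthogonality to holomorphic functions).

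Third, the estimate. It remains to prove $\norm{g}^2\le\int_B\abs{u_0}^2\phi_{\zeta\bar\zeta}\,\ephi$. This is exactly H\"ormander's weighted $L^2$--estimate for $\dbar$ on the pseudoconvex ball $B$ with the (now strictly) plurisubharmonic weight $\phi(\zeta_0,\cdot)$: the minimal solution of $\dbar_z g=u_0\,\dbar_z\phi_\zeta$ satisfies
\[
\norm{g}^2\le\int_B\bigl\langle(\partial_z\dbar_z\phi)^{-1}\,u_0\dbar_z\phi_\zeta,\ u_0\dbar_z\phi_\zeta\bigr\rangle\,\ephi .
\]
Finally, the integrand on the right equals $\abs{u_0}^2$ times the Hermitian form in the vector $(\phi_{\zeta\bar z_k})_{k}$ associated with the inverse of the fibrewise Hessian block $(\phi_{z_j\bar z_k})$, and the hypothesis that $\phi$ is plurisubharmonic on $D\times B$ says, via the Schur complement of that block inside the full $(n+1)\times(n+1)$ complex Hessian of $\phi$, that this form is $\le\phi_{\zeta\bar\zeta}$ at every point. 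Integrating against $\ephi$ over $B$ gives $\norm{g}^2\le\int_B\abs{u_0}^2\phi_{\zeta\bar\zeta}\,\ephi$, hence $\langle\Theta_{\zeta_0}u_0,u_0\rangle\ge0$, which is the assertion.

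The main obstacle --- the only point beyond bookkeeping --- is the reduction to smooth, fibrewise strictly plurisubharmonic weights: this is what makes the Hessian block $(\phi_{z_j\bar z_k})$ invertible so that H\"ormander's estimate is available in the sharp form used above, and one must check both that the approximants can be chosen plurisubharmonic on all of $D\times B$ and that nonnegativity of the curvature survives the passage to the limit. (One could instead keep $\phi$ merely bounded plurisubharmonic and observe that, by positivity of the full Hessian, $\dbar_z\phi_\zeta$ automatically annihilates the degenerate directions of $\partial_z\dbar_z\phi$, so that the quadratic form above still makes sense with a pseudo-inverse; but carrying the approximation out explicitly is cleaner.)
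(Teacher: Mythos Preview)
The paper does not prove this theorem; it is quoted (without proof) as a special case of the main result of \cite{2Berndtsson}. Your argument is essentially the proof given in that reference: compute the curvature of the Chern connection on the trivial bundle with fibre $H^2(B,dm)$, identify the defect term as the norm of the $L^2$-minimal solution of $\dbar_z g=u_0\,\dbar_z\phi_\zeta$, bound it by H\"ormander's estimate, and absorb the resulting quadratic form into $\phi_{\zeta\bar\zeta}$ via the Schur complement of the fibrewise Hessian inside the full complex Hessian of $\phi$. So the route you take coincides with the source the paper cites.

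One point deserves a sentence, however. The present paper does \emph{not} define positive curvature through a curvature operator $\Theta$; in subsection~2.1 it adopts the convention that $\E$ is positively curved if and only if its dual is negatively curved, i.e.\ $\log\norm{\xi}^*_\zeta$ is plurisubharmonic for every local holomorphic section $\xi$ of $\E^*$. Your computation yields $\langle\Theta_{\zeta_0}u_0,u_0\rangle\ge0$ for every $u_0$, which is Griffiths positivity in the operator sense; you should add the (easy) remark that this implies the dual-negativity condition used here. Concretely, for a holomorphic section $\xi$ of $\E^*$ and a point $\zeta_0$, choose $u_0$ in the fibre realizing $\norm{\xi(\zeta_0)}^*$ and extend it to a holomorphic section $u$ of $\E$ with $\nabla^{1,0}u(\zeta_0)=0$; then $\log\abs{\xi(u)}$ is harmonic and dominates $\log\norm{\xi}^*+\log\norm{u}$, with equality at $\zeta_0$, while your inequality gives $\partial\dbar\log\norm{u}\le0$ at $\zeta_0$, so $\partial\dbar\log\norm{\xi}^*\ge0$ there. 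This is standard (and is in \cite{2Berndtsson}), but since the paper is explicit that curvature of infinite-rank bundles is ``a bit subtle'', it is worth closing the loop.
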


This result applies in particular to our present setting, with $D$ equal to the right half plane. Then
$$
v_\zeta= 2 u_s
$$
for $s=\Re\zeta$. By the definition of $u_s$, $u_{\Re\zeta}$ is  plurisubharmonic on  $D\times B$. Hence, by Theorem 2.3,  the norms $\|h\|_s$ on $H^2(B,dm)=H_0$ define a metric on $\E$ of positive curvature. Moreover, they depend only on $s=\Re\zeta$. 

\subsection{Infinite rank vector bundles}
The theory of infinite rank vector bundles in general, and their curvature in particular, is a bit subtle and perhaps not yet completely developed,see \cite{Lempert-Szoke} and \cite{2Berndtsson} . In particular, it is not clear if local triviality is really a reasonable requirement on vector bundles of infinite rank. In the present situation however our vector bundle is even globally trivial, so the notion of vector bundle itself does not pose any problem. In this paper we adopt the definition  that the bundle is positively curved if its dual bundle is negatively curved.  We define negative curvature as meaning that the logarithm of the norm of any local holomorphic section is plurisubharmonic. It is well known that for bundles of finite rank, this is equivalent to negative curvature.

Suppose that a  metric on a vector bundle (possibly of infinite rank) can be diagonalized, with diagonal entries $\omega_j(\zeta)$. Then the bundle is a direct sum of line bundles, so it follows that it is negatively curved if and only if $\log\omega_j$ are subharmonic for all $j$. Similarily, it is positively curved when  $\log\omega_j$ are superharmonic, and flat when $\log\omega_j$ are harmonic. If in addition, as in our present setting, the norms depend only on $s=\Re\zeta$, positive curvature means that
all $\omega_j(s)$ are logconcave. This is  our substitute for the logconcavity of $|\Omega(s)|$, mentioned in the introduction. 

\section{Integrals of quadratic forms and the proof of Theorem 1.1}
We continue the discussion from the previous section and specialize to $D=U$, the right half plane. Of course the integrability of $e^{-u}$ and $e^{-pu}$ is strongly connected to the finiteness of norms of the type in Proposition 2.1, when $h$ is a general element in $H^2(B,dm)$, e g $h=1$. We are therefore led to the question when integrals like
$$
\int_0^\infty e^s \|h\|^2_s ds
$$
converge, under the assumption that the family of norms $\|\cdot\|_s$ is positively curved. We will first look at this problem abstractly, when $\|\cdot\|_s$ is a family of equivalent norms on an abstract Hilbert space $H_0$. There is a complication  arising from the fact that our vector bundle has infinite rank. For motivation we start by discussing the case of bundles of finite rank, and first of all even the case of rank one. Then  we have a one dimensional vector space, $E$, and the norms can be written
\be
\|h\|^2_s=\|h\|^2_0 e^{-k(s)}.
\ee
By the discussion in subsection 2.1, positive curvature means that $k(s)$ is convex.
\begin{prop} Let $k(s)$ be convex for $0\leq s$. Then 
$$
\int_0^\infty e^s e^{-k(s)}ds<\infty
$$
(if and) only if $\lim_{s\rightarrow \infty} k(s)/s>1$
\end{prop}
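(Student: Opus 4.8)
The plan is to prove both directions of the equivalence by elementary one-variable estimates, using convexity only through the fact that a convex function lies above each of its tangent lines and that difference quotients are monotone.

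\emph{The easy direction.} Suppose $\ell:=\lim_{s\to\infty}k(s)/s>1$; note this limit exists in $(-\infty,+\infty]$ precisely because $k$ is convex, as $k(s)/s$ is essentially the slope of the secant from a fixed point and is eventually monotone. Pick $\alpha$ with $1<\alpha<\ell$. Then for $s$ large we have $k(s)\geq \alpha s$, so $e^s e^{-k(s)}\leq e^{-(\alpha-1)s}$, which is integrable near infinity; on the bounded part $[0,S_0]$ the integrand is continuous hence integrable. This gives convergence. Conversely — and this is the ``if and'' that the author has parenthesized, so I will include it for completeness — suppose $\ell\leq 1$. If $\ell<1$, convexity gives (via the monotone difference quotient $k(s)/s$ tending up to $\ell$, or rather the slopes $(k(s)-k(0))/s$) that $k(s)\leq \beta s + C$ for some $\beta<1$ eventually, whence $e^s e^{-k(s)}\geq c\, e^{(1-\beta)s}\to\infty$ and the integral diverges. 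The borderline case $\ell=1$ is the delicate one: here I would use that the right derivative $k'(s)$ is nondecreasing with $\lim k'(s)\leq 1$ (if the limit of $k'$ exceeded $1$ then $k(s)/s$ would exceed $1$ in the limit), so $k'(s)\leq 1$ for all $s$, hence $k(s)-k(0)\leq s$, giving $e^s e^{-k(s)}\geq e^{-k(0)}>0$, a constant, so again the integral over $(0,\infty)$ diverges.

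\emph{Organizing the argument.} Concretely I would (i) record the lemma that for $k$ convex on $[0,\infty)$ the limit $\ell=\lim_{s\to\infty}k(s)/s$ exists in $(-\infty,+\infty]$ and equals $\lim_{s\to\infty}k'_+(s)$, where $k'_+$ is the (nondecreasing) right derivative; (ii) in the case $\ell>1$, fix $\alpha\in(1,\ell)$, choose $S_0$ with $k(s)\geq\alpha s$ for $s\geq S_0$, and split $\int_0^\infty=\int_0^{S_0}+\int_{S_0}^\infty$, bounding the tail by $\int_{S_0}^\infty e^{-(\alpha-1)s}\,ds<\infty$; (iii) in the case $\ell\leq1$, use $k'_+(s)\leq 1$ for all $s$ (which follows from monotonicity of $k'_+$ and $\lim k'_+\leq 1$) to get $k(s)\leq k(0)+s$, so the integrand is bounded below by the positive constant $e^{-k(0)}$ and the integral diverges.

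\emph{Main obstacle.} The substantive content is entirely in the borderline case $\ell=1$, where one cannot afford any slack: the point is that convexity forces the \emph{slope} to stay $\leq 1$ for all finite $s$ (not merely in the limit), and it is exactly this rigidity — convex functions with asymptotic slope $1$ grow at most linearly with slope $\leq 1$, not sublinearly-plus-linearly — that makes the integral diverge. I expect the bookkeeping about right derivatives versus secant slopes, and making sure the limit $\ell$ is well-defined and matches $\lim k'_+$, to be the only place requiring care; everything else is a routine split-and-estimate. Since the proposition as stated only asserts the ``only'' direction, one could in principle stop after step (ii), but including step (iii) costs little and clarifies why the hypothesis of positive curvature (convexity of $k$) is doing real work here, as opposed to the trivial observation that $k(s)/s>1$ suffices in general without any convexity.
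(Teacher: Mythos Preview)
Your proof is correct and follows essentially the same approach as the paper. The paper's version is a touch more direct: it first normalizes to $k(0)=0$ (harmless, since this only multiplies the integrand by a positive constant), after which the secant slope $k(s)/s$ is itself nondecreasing, so $\ell\le 1$ immediately gives $k(s)\le s$ for all $s>0$ without any detour through right derivatives or a separate treatment of the cases $\ell<1$ and $\ell=1$.
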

\begin{proof} We may of course assume that, as in (3.1), $k(0)=0$. Then $k(s)/s$ is increasing so its limit as $s$ goes to infinity exists. If the limit is not greater that one, then, since the quotient increases,  $k(s)\leq s$ for all $s>0$. Hence the integral diverges. The other direction is obvious.
\end{proof}
The next theorem is an analog of this statement for bundles of finite rank.
\begin{thm}
Let $\|\cdot\|_s$ be a family of Hilbert norms on some finite dimensional vector space $E$ such that the induced hermitean metric on the trivial vector bundle $\E:=U\times E$, $\|\cdot\|_{\Re \zeta}$, has positive curvature over $U$. Then the integrals
$$
\int_0^{\infty} e^s \|h\|^2_s ds
$$
converge for all $h$ in $E$ if and only if there are $\epsilon>0$ and $s_0$ such that
\be
 \|h\|^2_s\leq e^{-(1+\epsilon)s} \|h\|^2_0
\ee
for $s>s_0$ and any $h$ in $E$.\end{thm}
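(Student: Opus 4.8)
The plan is to transfer the convergence question from $\E$ to its dual bundle, where positive curvature turns into an honest convexity statement and Proposition 3.1 applies verbatim. The ``if'' direction is immediate: assuming (3.2), split $\int_0^\infty e^s\norm{h}_s^2\,ds=\int_0^{s_0}e^s\norm{h}_s^2\,ds+\int_{s_0}^\infty e^s\norm{h}_s^2\,ds$, bound the second integral by $\norm{h}_0^2\int_{s_0}^\infty e^{-\epsilon s}\,ds<\infty$, and observe that the first is finite because $\norm{\cdot}_s$ is a continuous, hence locally uniformly equivalent, family of norms on a finite dimensional space. The content is therefore the converse.

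For the converse, fix $\norm{\cdot}_0$ as a reference inner product, with orthonormal basis $e_1,\dots,e_d$ of $E$, and write $\norm{h}_s^2=\langle A(s)h,h\rangle_0$ with $A(s)$ positive and $A(0)=I$. For $\xi\in E^*\setminus\{0\}$ let $\norm{\xi}_s^*$ be the dual norm and put $g_\xi(s)=\log\paren{\norm{\xi}_s^{*2}/\norm{\xi}_0^{*2}}$, a quantity depending only on the line through $\xi$. Since $\E$ is positively curved, $\E^*$ is negatively curved by definition, so for each constant section $\xi$ the function $\zeta\mapsto\log\norm{\xi}_{\Re\zeta}^{*2}$ is plurisubharmonic on $U$; as it depends only on $\Re\zeta$, this is precisely the statement that $g_\xi$ is convex on $[0,\infty)$ with $g_\xi(0)=0$, by the same elementary observation used in subsection 2.1.

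Now I would plug the convergence hypothesis into Proposition 3.1, one $\xi$ at a time. Since $A(s)^{-1}\geq\lambda_{\max}(A(s))^{-1}I$, a unit vector $\xi$ satisfies
$$
e^{-g_\xi(s)}=\frac{1}{\langle A(s)^{-1}\xi,\xi\rangle_0}\leq\lambda_{\max}(A(s))\leq\operatorname{tr}A(s)=\sum_{j=1}^d\norm{e_j}_s^2,
$$
so that $\int_0^\infty e^se^{-g_\xi(s)}\,ds\leq\sum_j\int_0^\infty e^s\norm{e_j}_s^2\,ds<\infty$ by hypothesis. Proposition 3.1, applied to the convex function $k=g_\xi$, then gives $\lim_{s\to\infty}g_\xi(s)/s>1$ for every nonzero $\xi$.

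It remains to promote this pointwise assertion to the uniform bound (3.2), which is equivalent to the existence of $\epsilon>0$ and $s_0$ with $g_\xi(s)\geq(1+\epsilon)s$ for all $s>s_0$ and all unit $\xi$; here I would use compactness of the unit sphere of $E^*$. Were (3.2) false, then for each $n$ there would be a unit $\xi_n$ and $s_n>n$ with $g_{\xi_n}(s_n)<(1+1/n)s_n$, and convexity together with $g_{\xi_n}(0)=0$ would force $g_{\xi_n}(t)<(1+1/n)t$ for every $t\in(0,s_n)$. Passing to a subsequence with $\xi_n\to\xi_*$ on the sphere and using that $\xi\mapsto g_\xi(t)$ is continuous away from $0$, one would get $g_{\xi_*}(t)\leq t$ for all $t>0$, hence $\lim_{t\to\infty}g_{\xi_*}(t)/t\leq1$, contradicting the pointwise conclusion at $\xi_*$. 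This last uniformity step is the only genuinely delicate point: for a single $\xi$ the estimate is just Proposition 3.1, but since the metric need not be simultaneously diagonalizable in $s$ there is no a priori uniform rate at which $g_\xi(s)/s$ reaches its limit, and the compactness argument is what supplies one.
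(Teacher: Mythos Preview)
Your argument is correct and takes a genuinely different route from the paper's. The paper argues on the primal side: assuming (3.2) fails for some $\epsilon>0$ at some $s>1/\epsilon$, it diagonalises $\|\cdot\|_s$ against $\|\cdot\|_0$, picks an eigenvector $e_0$ with small eigenvalue, and interpolates between $0$ and $s$ with the \emph{flat} metric built from that diagonalisation. The maximum principle for positively curved metrics (cited from Berman--Keller) then forces $\|e_0\|_t^2\geq e^{-(1+\epsilon)t}$ on $[0,s]$, so $\int_0^s e^t\|e_0\|_t^2\,dt\gtrsim 1/\epsilon$; finite dimensionality enters at the end via equivalence of norms, to rule out an unbounded family of such integrals. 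Your approach instead passes to the dual bundle and uses the paper's very definition of positive curvature to get, for each fixed $\xi$, convexity of $g_\xi$ for free, so Proposition~3.1 applies directly; finite dimensionality enters through compactness of the unit sphere to upgrade the pointwise limit $\lim g_\xi(s)/s>1$ to a uniform one. What your route buys is economy: you avoid the flat-comparison construction and the external maximum-principle lemma entirely. What the paper's route buys is that the same flat-comparison technique is exactly what survives to the infinite-rank case (Theorem~3.3), where your compactness step is unavailable; in that sense the paper's proof of Theorem~3.2 is a warm-up for the argument that actually gets used.
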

\begin{proof}
One direction is of course clear; if (3.2) holds the integral converges. So, assume that (3.2) does not hold. Then, for any $\epsilon>0$ we can find  $s>1/\epsilon$ and some $h$ in $E$ such that
\be
 \|h\|^2_s > e^{-(1+\epsilon)s} \|h\|^2_0.
\ee
By the spectral theorem we can choose an orthonormal basis, $e_j$, for $\|\cdot\|_0$ that diagonalises  $\|\cdot\|_s$. If 
$$
h=\sum c_j e_j
$$
we can write
$$
\|h\|^2_0=\sum |c_j|^2 \quad \text{and}\quad \|h\|^2_s=\sum |c_j|^2e^{-s\lambda_j}
$$
since the eigenvalues are positive. By (3.3), at least one $\lambda_j$ - say $\lambda_0$ - is smaller than $(1+\epsilon)$. 

\bigskip

We now define another family of norms $|\cdot|_t$ for $0\leq t\leq s$ by
$$
|h|^2_t=\sum |c_j|^2 e^{-t\lambda_j}
$$
Since $e_j(\zeta)=e_j e^{\lambda_j\zeta/2}$ defines a global holomorphic orthonormal frame,  $|h|^2_{\Re \zeta}$ defines a hermitean metric on $\E$ of zero curvature. Moreover the new norms agree with the previous ones for $t=0$ and $t=s$. By the maximum principle for positive metrics (see e g \cite{Berman-Keller}, Lemma 8.11) we have
$$
\|h\|^2_t\geq |h|^2_t
$$
for $0\leq t\leq s$. Choose $h=e_0$. Then we conclude that
$$
\|e_0\|^2_t\geq |e_0|^2_t\geq \|e_0\|^2_0 e^{-(1+\epsilon)t}.
$$
Therefore
$$
\int_0^s e^t\|e_0\|^2_t dt\geq \|e_0\|^2_0\int_0^s e^{-\epsilon t} dt\geq
 \|e_0\|^2_0 (1-e^{-1})/\epsilon,
$$
since $s>1/\epsilon$. Since $\epsilon$ can be taken arbitrarily small we see that there is no constant such that
\be
\int_0^{\infty} e^s \|h\|^2_s ds\leq C\|h\|_0^2
\ee
for all $h$ in $E$. 
On the other hand, if all integrals
$$
\int_0^\infty e^s \|h\|^2_s ds
$$
did converge, these integrals would define a new norm on $E$. Since all norms on a finite dimensional vector space are comparable, there would be a constant $C$ so that (3.4)    holds. This completes the proof. 

\end{proof}

\bigskip

A statement as clean as in the theorem can not hold for bundles of infinite rank. To see this, consider again the rank one case. If the integral in Proposition 3.1 converges, we have $k(s)/s>(1+\epsilon)$ if $s$ is sufficiently large, but there is of course no uniformity in $\epsilon$ or in how large $s$ has to be chosen. Now take e g  $E$ to be $l^2$, with standard basis $e_j$. Let the norms be defined by 
$$
\|e_j\|_s^2= e^{-k_j(s)}
$$
with $k_j$ convex. If the $k_j$ are chosen with, say, $k_j(s)=0$ for $s<j$ we see that the claim of Theorem 3.2 fails. It seems possible that in the infinite rank case one always has that the space of $h$ in $E$ such that
$$
\limsup \log\|h\|^2_s/s<-(1+\epsilon),
$$
for some $\epsilon>0$, is dense for the norm $\|\cdot\|_0$. This would suffice for our application, but I have not been able to prove that, (see however the last section),  so we will get by with a weaker statement.

\begin{thm} Let $H_0$ be a (separable) Hilbert space equipped with a decreasing family of equivalent Hilbert norms $\|\cdot\|_s$ of positive curvature, defining new Hilbert spaces, $H_s$. Let $H$ be the subspace of $H_0$ of elements $h$ such that
$$
\|h\|^2:=\int_0^\infty e^s \|h\|^2_s ds<\infty.
$$
Then, for any $h$ in $H$,  $\epsilon>0$  and $s>1/\epsilon$ there is an element $h_s$ in $H_0$ such that
\be
\|h-h_s\|^2_0\leq 2\epsilon \|h\|^2,
\ee
 and
\be
\|h_s\|^2_s\leq e^{-(1+\epsilon)s}\|h\|^2_0.
\ee
\end{thm}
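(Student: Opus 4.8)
The plan is to imitate the finite-rank argument of Theorem 3.2 as closely as possible, the point being that the comparison-with-flat-metric step only needs a finite-dimensional subspace, while the conclusion there used that all norms on $E$ are comparable — the step that genuinely fails in infinite rank. So I would first reduce to the situation where $h$ fails the decay estimate \eqref{3.6} and extract, by the spectral theorem applied to the single pair $\|\cdot\|_0,\|\cdot\|_s$, an eigenvalue $\lambda_j<1+\epsilon$; more precisely, with $\{e_j\}$ an orthonormal basis for $\|\cdot\|_0$ diagonalizing $\|\cdot\|_s$ with eigenvalues $e^{-s\lambda_j}$, I would split $h=\sum c_je_j$ into the ``slowly decaying'' part $h_s:=\sum_{\lambda_j< 1+\epsilon}c_je_j$ and the rest $h':=h-h_s=\sum_{\lambda_j\geq 1+\epsilon}c_je_j$. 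By construction $\|h_s\|_s^2=\sum_{\lambda_j<1+\epsilon}|c_j|^2e^{-s\lambda_j}\geq e^{-(1+\epsilon)s}\sum_{\lambda_j<1+\epsilon}|c_j|^2$; this is close to \eqref{3.6} but points the wrong way, so I expect the actual definition of $h_s$ to be the complementary one — keep the fast-decaying modes — and I would instead want $\|h_s\|_s^2 = \sum_{\lambda_j \geq 1+\epsilon}|c_j|^2 e^{-s\lambda_j}\leq e^{-(1+\epsilon)s}\|h'\|_0^2\leq e^{-(1+\epsilon)s}\|h\|_0^2$, which is exactly \eqref{3.6}. So set $h_s:=\sum_{\lambda_j\geq 1+\epsilon}c_je_j$.

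It then remains to prove the approximation estimate \eqref{3.5}, i.e. that the discarded part $h-h_s=\sum_{\lambda_j<1+\epsilon}c_je_j$ is small in $\|\cdot\|_0$, with the bound $2\epsilon\|h\|^2$. Here I would run exactly the flat-metric comparison from Theorem 3.2, but applied to the finite-dimensional subspace $F$ spanned by the $e_j$ with $\lambda_j<1+\epsilon$ (finite-dimensional because the $\lambda_j$ are bounded below, the norms being equivalent, so only finitely many are $<1+\epsilon$). On $F$ define the flat metric $|g|_t^2:=\sum_{\lambda_j<1+\epsilon}|d_j|^2e^{-t\lambda_j}$ for $g=\sum d_je_j$; since $e_j(\zeta)=e_je^{\lambda_j\zeta/2}$ is a global holomorphic frame, $|\cdot|_{\Re\zeta}$ has zero curvature, and it agrees with $\|\cdot\|$ at $t=0$ and, crucially, is $\leq \|\cdot\|_t$ for $0\leq t$ by the maximum principle for positively curved metrics (Lemma 8.11 of \cite{Berman-Keller}) — wait, that comparison as stated requires agreement at two endpoints $t=0$ and $t=s$; the flat metric here agrees with $\|\cdot\|$ at $t=0$ only. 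So I would instead invoke the one-sided statement: a positively curved metric dominates, on the whole half-line, the flat metric determined by its boundary values and the requirement of boundedness — equivalently, $\log(\|e_j\|_t^2/|e_j|_t^2)$ is superharmonic and bounded below hence nonnegative. Granting this, for each such $j$ we get $\|e_j\|_t^2\geq e^{-\lambda_jt}\geq e^{-(1+\epsilon)t}$ (using $\lambda_j<1+\epsilon$ and $t\geq 0$), and then for $g=h-h_s$ supported on these modes, $\|g\|_t^2\geq e^{-(1+\epsilon)t}\|g\|_0^2$ for $0\leq t\leq s$.

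Now integrate: $\|h\|^2\geq \int_0^s e^t\|g\|_t^2\,dt\geq \|g\|_0^2\int_0^s e^{-\epsilon t}\,dt = \|g\|_0^2(1-e^{-\epsilon s})/\epsilon$. Since $s>1/\epsilon$ we have $1-e^{-\epsilon s}>1-e^{-1}>1/2$, so $\|g\|_0^2 < 2\epsilon\|h\|^2$, which is \eqref{3.5} with $g=h-h_s$. Thus both required estimates hold and the theorem follows. The main obstacle I anticipate is precisely the maximum-principle step: in the finite-rank proof the flat comparison metric was pinned at both endpoints of $[0,s]$, whereas here I only have one endpoint, so I need the correct one-sided version (flat metric on $[0,\infty)$ with the given initial value, selected by a growth/boundedness condition on the eigen-sections) and must check its hypotheses — convexity of $k(s)$, or equivalently that $\log\|e_j\|_s^2+\lambda_j s$ is concave and bounded below, forcing it to be $\geq$ its value at $0$. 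Everything else is the bookkeeping of the spectral decomposition, which goes through verbatim because it only involves the two norms $\|\cdot\|_0$ and $\|\cdot\|_s$ at a single fixed $s$, never the infinitely many at once.
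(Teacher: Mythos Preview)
Your overall strategy---spectral decomposition at the single level $s$, split $h$ into fast and slow modes, verify (3.6) trivially on the fast part, and use a flat comparison metric to bound the slow part via the integral $\|h\|^2$---is exactly the paper's. But there are two genuine gaps.

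First, the claim that the subspace $F$ spanned by the $e_j$ with $\lambda_j<1+\epsilon$ is finite-dimensional is false: equivalence of the norms gives $0\le\lambda_j\le M$ for some $M$, but there is no reason only finitely many $\lambda_j$ lie below $1+\epsilon$. So you cannot invoke a finite-rank maximum principle on $F$, and you are back to needing the infinite-rank version anyway.

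Second, and more seriously, your integration step ``$\|h\|^2\ge\int_0^s e^t\|g\|_t^2\,dt$'' implicitly uses $\|h\|_t^2\ge\|g\|_t^2$ for $0<t<s$, i.e.\ that the orthogonal splitting $h=g+h_s$ persists for the intermediate norms $\|\cdot\|_t$. It does not: $g$ and $h_s$ are orthogonal only for $\|\cdot\|_0$ and $\|\cdot\|_s$. The paper avoids this by defining the flat comparison metric $|h|_t^2:=\sum_j|c_j|^2e^{-t\lambda_j}$ on \emph{all} of $H_0$, not just on $F$. Then it agrees with $\|\cdot\|$ at \emph{both} endpoints $t=0$ and $t=s$ (your worry about only one endpoint matching is misplaced---the $e_j$ diagonalize both norms simultaneously, so the flat metric matches at $t=s$ too), the two-endpoint maximum principle gives $\|h\|_t^2\ge|h|_t^2$ for $0\le t\le s$, and since $g$ and $h_s$ \emph{are} orthogonal for the flat metric at every $t$, one gets $|h|_t^2\ge|g|_t^2\ge e^{-(1+\epsilon)t}\|g\|_0^2$. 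The chain $\|h\|_t^2\ge|h|_t^2\ge|g|_t^2$ is the missing link, and it requires the flat metric to live on the whole space. The infinite-rank maximum principle is then justified by approximating $\lambda$ by step functions, reducing to finite rank.
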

\begin{proof} Take $\epsilon>0$ and $s>1/\epsilon$. By assumption there is a bounded linear operator $T_s$ on $H_0$ such that
$$
\langle u,v\rangle_s=\langle T_su,v\rangle_0.
$$
By the spectral theorem (see \cite{Reed-Simon}) we can realize our Hilbert space $H_0$ as an $L^2$-space over a measure space $X$,  with respect to some positive measure $d\mu$, in such a way that 
$$
\|h\|^2_0=\int_X |h|^2 d\mu(x)
$$
and 
$$
\|h\|^2_s=\int_X |h|^2 e^{-s\lambda(x)} d\mu(x).
$$
We define $h_s$ by $h_s=\chi(x)h$, where $\chi$ is the characteristic function of the set $\lambda>(1+\epsilon)$. Let $w=h-h_s$. Clearly
$$
\|h_s\|^2_s=\int_{\lambda>1+\epsilon}|h|^2 e^{-s\lambda(x)}d\mu(x)\leq
e^{-(1+\epsilon)s}\int_X |h|^2d\mu= e^{-(1+\epsilon)s}\|h\|^2_0.
$$
Hence (3.5) is satisfied. For (3.4) we will again use comparison with a flat family of metrics, which now are defined for $0\leq t\leq s$ by
\be
|h|_t^2=\int_X |h|^2 e^{-t\lambda(x)}d\mu(x).
\ee
This is again a flat metric in the sense that any element $h$ in $H_0$ can be extended holomorpically as $h_\zeta=h e^{\zeta\lambda/2}$ in such a way that
$$
\|h_\zeta\|^2_{\Re\zeta}
$$
is constant. Since $|h|_t$ coincides with $\|h\|^2_t$ for $t=0$ and $t=s$ we can apply the maximum principle for positively curved metrics again. (This is perhaps not completely obvious since we have bundles of infinite rank, but we postpone the motivation for this to the end of the proof.) Hence
$$
\|h\|_t^2\geq|h|^2_t
$$
for $t$ between 0 and $s$. Since moreover $w$ and $h_s$ are orthogonal for the scalar product defined by $|\cdot|_t$,
$$
\int_0^s e^t\|h\|^2_t dt\geq\int_0^s e^t|h|^2_t dt\geq \int_0^s e^t|w|^2_t dt.
$$
By the definition of $w$ 
$$
|w|^2_t\geq e^{-t(1+\epsilon)}\|w\|^2_0.
$$
Hence
$$
\int_0^s e^t|w|^2_t dt\geq \int_0^s e^{-\epsilon t}dt\|w\|^2_0\geq 1/(2\epsilon)\|w\|^2_0,
$$
since $s>1/\epsilon$. All in all
$$
\|w\|^2_0\leq 2\epsilon \|h\|^2
$$
so we have proved (3.4). It only remains to motivate why the maximum principle holds for our bundle of infinite rank. It suffices to prove the opposite inequality for negatively curved bundles. Recall that
$$
|h|_t^2=\int_X |h|^2 e^{-t\lambda(x)}d\mu(x).
$$
Since the the $s$-norm and the 0-norm are equivalent, $\lambda$ is bounded. We can therefore approximate $\lambda$ arbitrarily well from above by a sequence of finite valued step functions $\lambda_j $. This decomposes $E$ as a direct sum of finitely many subspaces $E_l$ on which
$$
\int_X |h|^2 e^{-t\lambda_j(x)}d\mu(x)=|h|^2_0 e^{-t\mu_l}.
$$
By choosing orthonormal bases in each summand we can this way exhaust our bundle by finite rank subbundles on each of which the flat metric is still flat. It then suffices to apply the finite rank maximum principle to these subbundles (on which the other metric is also negative), and pass to the limit.

\end{proof}
\subsection{End of the proof of Theorem 1.1}
We shall now see how Theorem 1.1 follows from Theorem 3.3. We take $H_0=H^2(B,dm)$ and $H_s$ defined as in section 2. Assume that 
$$
\int_B e^{-u}<\infty.
$$
Then $h=1$ lies in $H$ (defined in Theorem 3.3) by Proposition 2.2. Take $\epsilon>0$ so small that
$$
2\epsilon\|h\|^2=2\epsilon\int_B e^{-u}=:\epsilon_0,
$$
where $\epsilon_0$ is chosen so that
if $w$ is holomorphic and 
$$
\int_B |w|^2\leq \epsilon_0,
$$
then
$$
\sup_{B/2}|w|\leq 1/10.
$$
Then $h_s=1-w$ satisfies $1<2|h_s|^2$ on $B/2$ so
$$
\int_{B/2} e^{-2u_s}\leq 2\|h_s\|^2_s\leq e^{-s(1+\epsilon)}\|h\|^2_0.
$$
Take $p=1+\epsilon/2$, multiply by $e^{ps}$ and integrate from 0 to infinity. By Lemma 2.3 we get that
$$
\int_{B/2} e^{-pu} <\infty.
$$
We also note that the only requirement on $\epsilon$ was that it satisfy
$$
\epsilon\leq\epsilon_0/(2\int_B e^{-u}),
$$
so we have also proved the last part of Theorem 1.1.
\subsection{A last comment on Theorem 3.2}
We have seen that Theorem 3.2 cannot hold in the case of Hilbert spaces of infinite rank, but as mentioned above a variant, weaker that Theorem 3.2 but stronger than Theorem 3.3, might hold. We shall now see that in our particular situation this is actually the case, and that this fact is in some sense 'equivalent' to the openness conjecture. Let $H'$ be equal to the space pf $h$ in $H_0$ such that there is an $\epsilon>0$ with
$$
\limsup\log \|h\|_s^2/s<-(1+\epsilon).
$$
It is easy to see that this is a subspace of $H_0$, and the question is if this subspace is dense in $H_0$. We shall now see that when $H_0=H^2(B,dm)$ and 
$$
\|h\|^2_s=\int_B |h|^2 e^{-2u_s}
$$
it follows from the openness conjecture that this is true. 
\begin{prop} Let $u\leq 0$ be plurisubharmonic in the unit ball, such that
$$
\int_B e^{-u}<\infty.
$$
Then any square integrable  holomorphic function in the ball can be arbitrarily well approximated in the $L^2$-norm by holomorphic functions $h_j$ satisfying
$$
\int_B |h_j|^2 e^{-p_ju} < \infty
$$
for some $p_j>1$ (depending on $j$).
\end{prop}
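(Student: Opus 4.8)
The plan is to prove the equivalent statement that
$$
\mathcal A:=\Bigl\{h\in H^2(B,dm):\ \textstyle\int_B|h|^2e^{-pu}<\infty\ \text{for some }p>1\Bigr\}
$$
is dense in $H^2(B,dm)$; this gives the density of the space $H'$ from the preceding discussion, because for $h\in\mathcal A$ with $1<p<2$ one has, using $u_s\ge 0$ and $u_s\ge u+s$,
$$
\|h\|_s^2=\int_B|h|^2e^{-2u_s}\le\int_B|h|^2e^{-pu_s}\le e^{-ps}\int_B|h|^2e^{-pu},
$$
so $\limsup_s\log\|h\|_s^2/s\le -p<-1$. Note also that $\mathcal A$ is stable under multiplication by functions holomorphic and bounded on $\bar B$; since polynomials are dense in $H^2(B,dm)$ and each polynomial $q$ satisfies $\sup_{\bar B}|q|<\infty$, it suffices to show that for every $\delta>0$ there is a single $h_0\in\mathcal A$ with $\|1-h_0\|_{L^2(B)}<\delta$ (then $qh_0$ does the job for a general $f$, after first approximating $f$ by a polynomial).

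Next I would extract what openness gives us. For any $\zeta\in B$ and $t>0$ with $B(\zeta,t)\subset B$, Theorem 1.1 applied to $z\mapsto u(\zeta+tz)$ produces a number $p>1$ with $\int_{B(\zeta,t/2)}e^{-pu}<\infty$. Covering the compact set $\overline{B(0,r)}$ (for fixed $r<1$) by finitely many such half-balls and taking the smallest of the resulting exponents yields $p_r>1$ with $\int_{B(0,r)}e^{-p_ru}<\infty$; moreover, since $u\le 0$ forces $e^{-pu}\downarrow e^{-u}$ as $p\downarrow 1$, one may even arrange $\int_{B(0,r)}e^{-p_ru}\le\int_Be^{-u}+1$ by taking $p_r$ close enough to $1$. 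This is, however, all that openness yields: it controls $e^{-pu}$ only on compact subsets of $B$, and says nothing about its behaviour near $\partial B$.

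The mechanism for producing $h_0$ is then a $\bar\partial$-argument. Fix $r<1$, put $r'=(1+r)/2$ and $p'=p_{r'}$, and choose $\chi\in C_c^\infty\bigl(B(0,r')\bigr)$ with $\chi\equiv 1$ on $B(0,r)$. Solve $\dbar v=\dbar\chi$ on the pseudoconvex ball $B$ using Hörmander's $L^2$-estimate against a weight $p'u+\phi$, where $\phi$ is plurisubharmonic on $B$ (and strictly so, so that the curvature controls the right-hand side); this is legitimate because $\dbar\chi$ is supported in the shell $B(0,r')\setminus B(0,r)$, on which $e^{-p'u}$ is integrable, so the estimate gives $\int_B|v|^2e^{-p'u}<\infty$. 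Then $h_0:=\chi-v$ is holomorphic on $B$, and
$$
\int_B|h_0|^2e^{-p'u}\le 2\int_{B(0,r')}e^{-p'u}+2\int_B|v|^2e^{-p'u}<\infty,
$$
so $h_0\in\mathcal A$; and $\|1-h_0\|_{L^2(B)}\le\|1-\chi\|_{L^2(B)}+\|v\|_{L^2(B)}$, with $\|1-\chi\|_{L^2(B)}\le\bigl(\omega_n(1-r^{2n})\bigr)^{1/2}\to 0$ as $r\to 1$.

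The hard part will be the last point: making $\|v\|_{L^2(B)}$ genuinely small, not merely finite. Hörmander delivers a bound on $\int_B|v|^2e^{-(p'u+\phi)}$ in terms of the curvature of $\phi$ on the shell and the bounded quantity $\int_{B(0,r')}e^{-p'u}$, but converting this to smallness of the unweighted $\int_B|v|^2$ is delicate exactly in the shell near $\partial B$ — the region where $\dbar\chi$ is concentrated, where $\|\dbar\chi\|_\infty$ blows up like $(1-r)^{-1}$, and where openness provides no quantitative control; a careless choice of $\phi$ only yields $\|v\|_{L^2(B)}=O(1)$. The remedy must use that $v$ is holomorphic outside $B(0,r')$ together with a $\phi$ that concentrates curvature in the shell without growing too large, and let $r\to 1$ (with $p'\downarrow 1$, keeping $\int_{B(0,r')}e^{-p'u}$ bounded) at a coordinated rate. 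Carrying out this estimate is where essentially all the work lies, and it is precisely here that the openness conjecture is fed into the density statement.
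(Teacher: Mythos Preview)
The paper itself does not give a proof of this proposition: after stating it, the text says only that ``the proof of Proposition~3.4 uses Theorem~1.1, but we omit the details.'' So there is no argument in the paper to compare yours against; all one can check is whether your plan is consistent with that one-line hint and whether it can be completed.

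Your reductions are correct and natural. Passing from a general $h\in H^2(B,dm)$ to the constant $1$ via polynomials and the ideal property of $\mathcal A$ is clean, and your use of Theorem~1.1 to obtain, for every $r<1$, an exponent $p_r>1$ with $\int_{B(0,r)}e^{-p_r u}<\infty$ (together with the monotone-convergence refinement $\int_{B(0,r)}e^{-p_r u}\le\int_B e^{-u}+1$) is exactly the way the openness theorem enters. The $\bar\partial$-scheme $h_0=\chi-v$ then immediately yields $h_0\in\mathcal A$, and the only issue---as you say---is making $\|v\|_{L^2(B)}$ small.

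That gap is genuine and is not closed in your write-up. The na\"ive H\"ormander bound gives
\[
\|v\|_{L^2(B)}^2\;\le\;C\,\|\bar\partial\chi\|_\infty^2\int_{r<|z|<r'}e^{-p'u},
\]
and the two factors compete: the first blows up like $(r'-r)^{-2}$, while the second tends to $0$ only at an uncontrolled rate. Neither letting $r'\to r$, nor letting $p'\downarrow 1$ with $r,r'$ fixed, nor the quantitative bound $p-1\ge\delta_n/\!\int e^{-u}$ from Theorem~1.1 (which degenerates under the rescaling needed near $\partial B$) resolves this by itself. Your last paragraph proposes to exploit the holomorphicity of $v$ on $B\setminus B(0,r')$ and a weight $\phi$ with curvature concentrated in the shell; that is a reasonable direction, but as written it is a statement of intent rather than an argument, and the obvious candidates for $\phi$ (e.g.\ $A\max(|z|^2,r^2)$) only push the problem to the outer annulus. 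In short: your outline matches the paper's hint, but the step you flag as ``where essentially all the work lies'' is indeed where all the work lies, and it remains to be done.
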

By (2.1)
$$
\int_B |h_j|^2 e^{-2u_s}\leq e^{-ps} \int_B|h|^2 e^{-pu},
$$
so $h_j$ lies in the space $H'$. Therefore the corollary implies that $H'$ is dense in $H_0$ in our particular situation. The proof of Proposition 3.4, uses Theorem 1.1, but we omit the details.

\def\listing#1#2#3{{\sc #1}:\ {\it #2}, \ #3.}

\end{document}